\title{Pro--$p$ groups and towers of rational homology spheres}
\author{Nigel Boston}
\address{Department of Mathematics\\
University of Wisconsin\\\newline
Van Vleck Hall\\
480 Lincoln Drive\\
Madison WI 53706\\
USA}
\email{boston@math.wisc.edu}
\urladdr{}
\author{Jordan S Ellenberg}
\email{ellenber@math.wisc.edu}
\urladdr{}
\let\xysavmatrix\xymatrix
\def\xymatrix{\disablesubscriptcorrection\xysavmatrix}
\DeclareMathOperator{\SL}{SL}
\newcommand{\field}[1]{\mathbb{#1}}
\newcommand{\F}{\field{F}}
\newcommand{\ra}{\to}
\newcommand{\ic}[1]{\mathfrak{#1}}
\newcommand{\HH}{\mathcal{H}}
\newcommand{\inj}{\hookrightarrow}
\newcommand{\beq}{\begin{displaymath}}
\newcommand{\eeq}{\end{displaymath}}
\newcommand{\beqn}{\begin{equation}}
\newcommand{\eeqn}{\end{equation}}
\def\cnewtheorem#1[#2]#3{\newtheorem{#1}{#3}
\expandafter\let\csname c@#1\endcsname\c@thm}
\theoremstyle{plain}
\theoremstyle{definition}
\theoremstyle{remark}
\begin{document}

\begin{abstract}
In the preceding paper, Calegari and Dunfield exhibit a sequence of
hyperbolic 3--manifolds which have increasing injectivity radius, and
which, subject to some conjectures in number theory, are rational
homology spheres.  We prove unconditionally that these manifolds are
rational homology spheres, and give a sufficient condition for a tower
of hyperbolic 3--manifolds to have first Betti number 0 at each
level.  The methods involved are purely pro--p group theoretical.
\end{abstract}

\maketitle

In \cite{cale:dc}, Calegari and Dunfield give a conditional answer to
a question of Cooper \cite[Problem 3.58]{kirb:problems} by exhibiting
a series of hyperbolic $3$--manifolds $M_1, M_2, \ldots$, such that
\begin{itemize}
\item the injectivity radius of $M_n$ is unbounded;
\item subject to the Generalized Riemann Hypothesis and Langlands-type
conjectures about the existence of Galois representations attached to
automorphic forms, $H^1(M_n,\Q) = 0$ for all $n$.
\end{itemize}

These 3--manifolds are constructed as quotients of hyperbolic
$3$--space by certain arithmetic lattices in $\SL_2(\C)$.  In the
following note, we explain how to prove unconditionally that
$H^1(M_n,\Q) = 0$ for all $n$, without use of automorphic forms.  The
argument uses only the theory of pro--$\!p$ groups (see Dixon--du
Sautoy--Mann--Segal~\cite{ddms}). More specifically we prove that the
pro--$\!p$ completion of $\pi_1(M_n)$ is $p$--adic analytic. This should
generalize to some other lattices in $\SL_2(\C)$.  We emphasize,
however, that the present argument is not in {\em general} a
replacement for the argument of Calegari and Dunfield; we expect there
will be many hyperbolic manifolds to which the method of Galois
representations might be applicable, but whose fundamental groups do
not have analytic pro--$\!p$ completion.  In particular, it follows from
results of Lubotzky \cite[Theorem 1.2, Remark 1.4]{lubo:annals83} that
when $\Gamma$ is a lattice with $\dim H^1(\Gamma,\F_p) \geq 4$, the
pro--$\!p$ completion of $\Gamma$ is never analytic.  On the other hand,
the argument here does apply to some non-arithmetic lattices
\cite[Section 6.7]{cale:dc}.

{\bf Note}\qua We use number theorists' notation throughout, in which $\Z_3$
denotes the ring of $3$--adic integers, not the field with $3$ elements.


We recall some basic facts about cocompact lattices in $\SL_2(\C)$.
Let $\Gamma$ be a torsion-free cocompact lattice.  Then there is a number field
$K$ (which can be taken to be the {\em trace field} of $\Gamma$) 
and a quaternion algebra $A$ admitting an injection $\Gamma \inj A^\times$.
(See Maclachlan--Reid \cite[3.2]{macl:mare}.) For each prime $\ic{p}$
of $K$, let
$A_{\ic{p}}/K_{\ic{p}}$ be the completion of $A$ at the prime
$\ic{p}$, and write $A_{\ic{p}}^1$ for the sugroup of elements of norm
$1$. If $U$ is a uniformly powerful subgroup of $A_{\ic{p}}^\times$,
the lower $p$--central series $U=U_0,U_1,U_2,\ldots$ is defined by
$U_{i+1} = U_i^p[U,U_i]$.  Write $\HH$ for hyperbolic $3$--space; then
$\HH/\Gamma$ is a compact hyperbolic $3$--manifold, which is a rational
homology sphere just when $H^1(\Gamma,\Q) = 0$.

\begin{prop} Let $\Gamma$ be a cocompact lattice of $SL_2(\C)$ and let $\ic{p}$
  be a prime of $K$ such that
\begin{itemize}
\item the norm of $\ic{p}$ is an odd rational prime $p$;
\item the closure of the image of $\pi\co \Gamma \inj
  A_{\ic{p}}^\times$ contains an open pro--$\!p$ subgroup $U$ of
  $A_{\ic{p}}^1$ such that if $\Gamma_0 = \pi^{-1}(U)$, then
  $\Gamma_0/ \Gamma_0^p$ is isomorphic to $(\Z/p\Z)^3$.
\end{itemize}
Then every normal subgroup $H$ of $\Gamma_0$ with $p$--group quotient has
$H^1(H,\Q) = 0$.  In particular, taking $\Gamma_i$ to be $\pi^{-1}(U_i)$,
the tower of compact $3$--manifolds $\HH/\Gamma_i \,(i=0,1,\ldots)$ has
unbounded injectivity radius, and each $\HH/\Gamma_i$ is a rational homology
$3$--sphere.
\label{pr:main}
\end{prop}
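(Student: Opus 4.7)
The plan is to show that the pro-$p$ completion $\hat{\Gamma}_0$ is $p$-adic analytic of dimension $3$ with perfect Lie algebra, so that every open subgroup has finite abelianization, and then to descend this fact to each normal subgroup $H$ of $\Gamma_0$ with $p$-group quotient.

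The hypothesis $\Gamma_0/\Gamma_0^p \cong (\mathbb{Z}/p\mathbb{Z})^3$ forces the quotient to be abelian, so $[\Gamma_0,\Gamma_0] \subseteq \Gamma_0^p$; passing to the pro-$p$ completion, this shows that $\hat{\Gamma}_0$ is powerful in the sense of~\cite{ddms}. Since $\Gamma_0$ is finitely generated (being a cocompact lattice), so is $\hat{\Gamma}_0$, and a finitely generated powerful pro-$p$ group is $p$-adic analytic with $\dim \hat{\Gamma}_0 = d(\hat{\Gamma}_0) = 3$. The natural surjection $\hat{\Gamma}_0 \twoheadrightarrow U$ is a map of $p$-adic analytic groups of equal dimension, hence induces an isomorphism on Lie algebras; and the Lie algebra of $U$ agrees with that of $A_{\mathfrak{p}}^1$, namely the trace-zero part of the local quaternion algebra, a form of $\mathfrak{sl}_2$, and in particular simple, hence perfect.

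Now fix $H$ normal in $\Gamma_0$ with $p$-group quotient. I would identify $\hat{H}^{(p)}$ with the closure of $H$ in $\hat{\Gamma}_0$, an open subgroup of finite $p$-power index, via a cofinality argument: given any $N$ normal in $H$ with $H/N$ a $p$-group, set $M := \bigcap_{\gamma \in \Gamma_0} \gamma N \gamma^{-1}$. Since $N$ has finite index in $\Gamma_0$, only finitely many conjugates occur, so $\Gamma_0/M$ embeds in a finite product of $p$-power cardinality sets and is itself a $p$-group. Hence the $p$-group quotients of $H$ arising by restriction from $\Gamma_0$-normal subgroups of $p$-power index are cofinal, and $\hat{H}^{(p)}$ coincides with the closure of $H$ in $\hat{\Gamma}_0$. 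As an open subgroup of the $p$-adic analytic group $\hat{\Gamma}_0$, it is itself $p$-adic analytic of dimension $3$ with the same perfect Lie algebra.

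Consequently $\hat{H}^{(p),ab}$ is a finitely generated $\mathbb{Z}_p$-module whose free rank equals the $\mathbb{Q}_p$-dimension of the abelianization of the Lie algebra, which is zero; so $\hat{H}^{(p),ab}$ is finite. But $\hat{H}^{(p),ab} \cong H^{ab} \otimes_{\mathbb{Z}} \mathbb{Z}_p$, and $H^{ab}$ is a finitely generated abelian group, so finiteness of this tensor product forces $H^{ab}$ itself to be finite, yielding $H^1(H,\mathbb{Q}) = 0$. The tower statement is then immediate: each $\Gamma_i = \pi^{-1}(U_i)$ has finite $p$-power index in $\Gamma_0$, and $\bigcap_i \Gamma_i = \ker \pi = \{1\}$ forces the injectivity radius of $\mathcal{H}/\Gamma_i$ to tend to infinity. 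I expect the main obstacle to be the cofinality step identifying $\hat{H}^{(p)}$ with the closure of $H$ in $\hat{\Gamma}_0$; once that is in place, the rest follows from the structure theory in~\cite{ddms} together with the simplicity of $\mathfrak{sl}_2$.
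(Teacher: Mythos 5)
Your argument is correct and follows essentially the same route as the published proof: the hypothesis $\Gamma_0/\Gamma_0^p \cong (\Z/p\Z)^3$ makes the pro--$p$ completion powerful of dimension at most $3$, the surjection onto the $3$--dimensional uniform group $U$ then has finite (i.e.\ $0$--dimensional) kernel, and finiteness of abelianizations of open subgroups is inherited from $U$ --- the paper declares that last point ``clear'' where you spell it out via perfectness of the $\mathfrak{sl}_2$--form, and it gets the finite kernel from additivity of dimension in exact sequences where you pass through Lie algebras; these are the same fact dressed differently. Your explicit cofinality step identifying the pro--$p$ completion of $H$ with the closure of $H$ in $\hat\Gamma_0$ is left implicit in the paper and is worth having. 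One small correction: it is not true that a finitely generated powerful pro--$p$ group satisfies $\dim G = d(G)$ (for example $\Z/p\Z \times \Z_p^2$ is powerful with $d=3$ but dimension $2$); from \cite[Theorem 3.8]{ddms} you only get $\dim \hat\Gamma_0 \leq 3$, and the equality $\dim \hat\Gamma_0 = 3$ should be extracted from the surjection onto the $3$--dimensional group $U$ --- which you already have on hand, so the slip is harmless. Likewise, in the cofinality step $\Gamma_0/M$ is not literally a subgroup of a product (the conjugates $\gamma N\gamma^{-1}$ need not be normal in $\Gamma_0$); argue instead that $H/M$ is a $p$--group and that $\Gamma_0/M$ is an extension of the $p$--group $\Gamma_0/H$ by it.
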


\begin{proof}
The unboundedness of the injectivity radii of $\HH / \Gamma_i$ follows
immediately from the fact that the $\Gamma_i$ have trivial intersection.

  Write $T$ for the pro--$\!p$ completion of $\Gamma_0$.  By the {\em
  dimension} of a pro--$\!p$ group $T$, we mean the $\F_p$--dimension of
  $H^1(T_0,\F_p)$ for any uniformly powerful open subgroup $T_0$ of
  $T$ as in Dixon--du Sautoy--Mann--Segal \cite[Definition 4.7]{ddms}.
  The fact that $T/T^p
  \cong \Gamma_0/\Gamma_0^p \cong (\Z/p\Z)^3$ implies that $T$
  is powerful \cite[Definition 3.1(i)]{ddms} and has dimension at most $3$
  \cite[Theorem 3.8]{ddms}.  Since $U$ is torsion-free and has $U/U^p
  \cong (\Z/p\Z)^3$, it is uniformly powerful \cite[Theorem 4.5]{ddms} and
  has dimension $3$.   Since dimension is additive in exact
  sequences of pro--$\!p$ groups \cite[Theorem 4.8]{ddms}) we have that
  the surjection $T \ra U$ has finite kernel.  It is clear that every
  open subgroup of $U$ has finite abelianization; the same now follows
  for $T$.  This completes the proof.
\end{proof}

We now explain how to show that the tower of manifolds studied in
the preceding article in this volume, by Calegari and Dunfield
\cite{cale:dc} satisfies the conditions of \fullref{pr:main}.
We recall some definitions and notation from \cite{cale:dc}.  Let $D$ be
the quaternion algebra over $\Q(\sqrt{-2})$ which is ramified precisely
at the two primes $\pi$ and $\bar{\pi}$ dividing $3$, let $B$ be a
maximal order of $D$, and let $B^\times$ be the group of units of $B$.
Calegari and Dunfield consider a manifold $M_0$ whose fundamental group
is isomorphic to $B^\times / \pm 1$.

Let $B_\pi$ be the maximal order in the completion of $D$ at $\pi$; then
$B_\pi^\times$ is a profinite group with a finite-index pro--3 subgroup, and
the natural map  $B^\times \ra B_\pi^\times$ is an inclusion whose image contains a dense subgroup of the group $B_\pi^1$ of elements of reduced norm~$1$. 

Let $Q$ be the unique maximal two-sided ideal of $B_\pi$; then $(1+Q^n) \cap B_\pi^1$
is an open subgroup of $B_\pi^1$ for all $n \geq 0$, and is a pro--3 group
for $n \geq 1$.  Let $\Gamma_n$ be the preimage of $(1+Q^{n}) \cap B_\pi^1$ under $B^\times \ra
B_\pi^\times$.  Then the content of \cite[Theorem 1.4]{cale:dc} is that $\Gamma_n$
has finite abelianization for all sufficiently large $n$.  In a slight
discord of notation, the group denoted $\Gamma_2$ by Calegari and Dunfield
plays the role of $\Gamma_0$ in \fullref{pr:main}.   It remains only to
check that $\Gamma_2 / \Gamma_2^3$ is isomorphic to $(\Z/3\Z)^3$.  

A presentation of $\Gamma_1$ is obtained by using Magma to calculate the normal subgroups of index $4$ in $\pi_1(M_0)$, for which a Wirtinger presentation was given in \cite{cale:dc}.
\begin{verbatim}
Gamma1 := Group < a,b,c,d | a*b^{-1}*c^{-1}*b*a^{-1}*d*c*d^{-1},
                     a*b*a^{-1}*d*c*d*a^{-2}*b*c,
                     a*d*c*d*a^{-1}*b*d^{-2}*c^{-1}*b^{-1},
                     c*d^2*c*d^2*c*d^2, c^3, a*c*b*c*a*b*d^{-2} >
\end{verbatim}
Then $\Gamma_2$ is the kernel of the map from $\Gamma_1$ to its maximal
elementary abelian $3$--quotient.  One can easily compute a presentation
of $\Gamma_2$ (too long to be worth including here) and from there it is a
simple matter to compute $\Gamma_2 / \Gamma_2^3$.  We have thus shown that the
manifolds appearing in \cite{cale:dc} are all rational homology
spheres.

\begin{rem} The group $\Gamma_2$ does not have the congruence subgroup
  property (see Lubotzky \cite{lubo:annals83}); however, one might think of
  \fullref{pr:main} as asserting a kind of ``pro--3 congruence
  subgroup property'': every finite-index normal subgroup of $\Gamma_2$ whose
  quotient is a $3$--group is indeed congruence.  It would be
  interesting to understand which lattices in $\SL_2(\C)$ are residual
  $p$--groups with the pro--$\!p$ congruence subgroup property for some $p$.
  This property certainly cannot hold for {\em  all} lattices, since
  there exist lattices with infinite
  abelianization (see Labesse--Schwermer \cite{labe:ls} and Lubotzky
  \cite{lubo:annals97}). For such
  lattices, $\dim H^1(\Gamma,\F_p) \leq 3$ by Lubotzky \cite{lubo:annals83}.
\end{rem}

\subsection*{Acknowledgments}
The second author was partially supported by NSF-{\sc career} Grant DMS-0448750
and a Sloan Research Fellowship.  The authors thank the anonymous referee
for his helpful comments and simplification of the proof of their main
proposition.

\bibliographystyle{gtart}
\bibliography{link}

\end{document}